\documentclass[letterpaper, 10pt, conference]{ieeeconf}
\IEEEoverridecommandlockouts%
\newtheorem{theorem}{Theorem}
\newtheorem{corollary}{Corollary}
\newtheorem{lemma}{Lemma}
\newtheorem{remark}{Remark}
\newtheorem{definition}{Definition}
\newtheorem{proposition}{Proposition}
\newtheorem{example}{Example}

\usepackage{amsmath,amsfonts,amssymb,color}
\allowdisplaybreaks%
\usepackage{epsfig}
\usepackage{psfrag}
\usepackage{algorithm}
\usepackage{array}
\usepackage{epstopdf}
\usepackage{tikz, subcaption, cite}
\usepackage{booktabs}
\usepackage{amsmath}
\usepackage{dblfloatfix}

\usetikzlibrary{quotes}

\title{\LARGE \bf Optimal Strategies in a Sequential Contest with Failures}
\author{Alec~Pannunzio*, Ashley~Shaffer*, Gregory Shaver, and Shreyas Sundaram
\thanks{The authors are with the School of Electrical and Computer Engineering, and the School of Mechanical Engineering at Purdue University. The following symbol * indicates equal contribution. Emails: {\tt \{afpannun, shaffe03, gshaver, sundara2\}@purdue.edu}.}%
}
\begin{document}
\maketitle
\thispagestyle{empty}
\pagestyle{empty}
\begin{abstract}
We study a Stackelberg model where two players compete in a sequential contest, and the player that successfully invests the most effort wins. Our formulation considers a probability that each player fails to invest their chosen level of effort, as captured by a failure function that increases with the amount of effort.  The players each have their own cost of failure, as well as a maximum level of effort (capturing different abilities of the players).  We prove three key results. First, we provide the optimal strategy for Player 1 (the follower), to maximize their utility given an observed effort exerted by Player 2 (the leader). Next, we characterize the optimal strategy to maximize the expected utility of Player 2, anticipating the optimal response by Player 1. Finally, we analyze the expected utilities of each player when both play their optimal strategy, and provide 
key insights into how heterogeneous player abilities and failure costs affect the outcomes of the game.
\end{abstract}
\section{Introduction}\label{sec:introduction}
\textit{Sequential contests} are single or multi-round competitions in which players compete in a non-simultaneous, specified order~\cite{konrad2009}. The player who moves first is referred to as the Stackelberg leader, while the remaining players act as followers. Sequential contests are primarily studied from two perspectives. The contest designer's perspective focuses on how the rules and structure of the contest should be organized to achieve a desired objective. Conversely, the competitor's perspective focuses on how players should perform or invest resources in the contest. In a sequential contest, players choose strategies and receive utilities that depend on both their own strategies and those chosen by other players. 
Many real-world competitive environments can be modeled as sequential contests, including rent-seeking competitions~\cite{Linster1993} and research and development (R\&D) races~\cite{HarrisVickers1987}.

The literature on sequential contests from a contest designer's perspective focuses on how contest structures influence players’ resource investment and performance. Increasing the information available to later players about earlier players' efforts increases the total effort invested in the contest~\cite{Hinnosaar2024}. Similarly, varying the value and number of prizes~\cite{Moldovanu2001,konrad2009} affects the total effort invested in the contest. The contest designer can also add constraints on the contest itself by setting a minimum effort requirement~\cite{Sela2013} to reduce the number of participants.

The literature on sequential contests from the competitor's perspective investigates how competitors allocate resources in a competition. In a rent-seeking contest~\cite{Linster1993}, the optimal allocations of the leader and follower can be characterized based on how the follower responds to the leader's allocation. The probability of winning can also impact the amount of effort allocated, with players adjusting their effort based on their opponents' performances~\cite{Dixit1987,Baik1992}. Other research studies how a leader's pre-allocation of resources influences the subsequent players' responses~\cite{Rahul2022}. Information asymmetry can also impact resource allocation. A player's true capabilities may be unknown until their performance is observed~\cite{Grimsman2020}, or the maximum amount of resources available to each player may be assigned randomly~\cite{Paarporn2025}. Information asymmetry can arise from uncertainty in a player's performance. This is discussed in~\cite{SegevSela2014}, which models a sequential contest where the leader's effort translates into a noisy output that is observed by the follower. The follower then decides how much effort to allocate based on this observed output. While these works consider various factors that affect players' resource allocation decisions, they do not consider the possibility that a player may fail when exerting effort.

In this paper, we are interested in understanding the optimal strategies of two players in a sequential game where each player may fail while exerting their chosen level of effort with an associated cost. Failure has been incorporated into game-theoretic and resource allocation models in several different ways. In~\cite{Sundaram}, players invest in a shared resource whose probability of failure increases with the total amount invested and failure results in the loss of investment. Failure probabilities have also been modeled as dependent on the workload allocated to individual resources, with greater workloads increasing the probability of failure~\cite{Zhu2022}. Relating back to competitive settings, players may fail by being unable to participate in the competition~\cite{Lewenberg2017}. Risk has also been considered as a strategic decision in contests, where a player chooses a level of risk that determines their performance and probability of failure~\cite{Spadoni2018}. While these works consider how failure can arise from resource investment, participation, or strategic risk, they do not consider how effort-dependent failure affects the strategies of players in a sequential contest.

In our work, we use a Stackelberg model to study this setting from the competitor's perspective. We incorporate an effort-dependent failure probability and an associated failure cost into each player's utility, with the probability of failure increasing as a player exerts more effort. This captures settings such as sports competitions, where a competitor's performance is related to their exerted effort, but greater exertion can also increase the risk of injury or a crash. In such settings, the key challenge is to understand how much effort the leader should exert while accounting for the probability and cost of failure, the ability of the follower to respond, and the relative values of the prizes. The main contributions of this paper are as follows:
\begin{enumerate}
    \item We provide a complete characterization of the optimal strategies for the leader and follower that maximize their individual utilities while accounting for the possibility of failure and associated costs.  
    \item We characterize the expected utilities of each player when they are playing their optimal strategies, and provide insights into how heterogeneous player abilities and failure costs impact the outcomes of the game.
\end{enumerate}
This paper is organized as follows. Section~\ref{sec:Problem_Formulation} details the problem formulation and formally defines our failure function. Section~\ref{sec:Optimal_Strategies} characterizes the utilities of each player and analyzes each player's optimal strategies based on the game model. Section~\ref{sec:utility_discussion} defines the expected utilities of each player when they are playing their optimal strategy along with numerical results for discussion. The paper is summarized and concluded in Section~\ref{sec:Conclusion}.

\section{Problem Formulation}\label{sec:Problem_Formulation}
We consider a sequential contest using a Stackelberg model in which there are two rational players, denoted Player 1 and Player 2, who must each choose an amount of effort $x_1 \in \mathbb{R}_{\ge 0}$ and $x_2 \in \mathbb{R}_{\ge 0}$, respectively, to invest in the game.  Each player $i \in \{1, 2\}$ has an associated \textit{failure function} $f_i(x)$ which maps each effort value, $x\in \mathbb{R}_{\ge 0}$, to the probability that the player fails to achieve that effort $x$. A more detailed discussion is provided in Section~\ref{subsec:failurefunction}. Each player $i$ has a failure cost defined as $c_i\in \mathbb{R}_{\ge 0}$ to capture injury or damage to assets in the event of failure.
Player 2 moves first in the game (takes on the role of leader) and Player 1 moves second (takes on the role of follower) after observing the outcome of Player 2's choice. If Player 2 fails to invest their chosen effort, Player 1 automatically wins. Conversely, if Player 2 successfully invests their effort and Player 1 fails to invest more or equal effort, Player 2 wins. Player 1 or Player 2 may choose to ``fold'' by investing 0 effort into the game and avoiding the risk of the failure cost. The player who successfully invests more effort will win and any ties are broken in favor of Player 1. We award $W_2\in\mathbb{R}_{\ge 0}$ for losing and $W_1\in\mathbb{R}_{> W_2}$ for winning. This notation resembles the final round in an elimination tournament~\cite{Rosen1986}.

\subsection{Failure Function}\label{subsec:failurefunction}
We suppose that each player $i \in \{1,2\}$ has an individual maximum level of effort that they can exert, denoted by $x_{i_{\text{max}}} \in \mathbb{R}_{> 0}$.  We will investigate failure functions of the 
form:
\begin{align}
f_i(x) =
\begin{cases}
\left(\frac{x}{x_{i_{\text{max}}}}\right)^r, & 0 \leq x < x_{i_{\text{max}}},\\
1, & x\geq x_{i_{\text{max}}},
\label{eqn:failure function}
\end{cases}
\end{align}
where the exponential parameter $r\in \mathbb{R}_{\ge 1}$ can be used to tune how quickly the probability of failure approaches 1 as $x$ approaches $x_{i_{\text{max}}}$.

This function has a few key properties that make it a useful choice for analysis. It allows us to specify a range of efforts ${[0, x_{i_{\text{max}}}]}$ that a given player may exert. The strictly increasing failure probability is zero with zero effort and 1 with $x_{i_{\text{max}}}$ effort. Additionally, with a high $r$ parameter, the function allows us to represent a player that has a relatively low chance of failure until they approach their $x_{i_{\text{max}}}$. Conversely, we can model a player with a linear increase in failure probability as they approach their $x_{i_{\text{max}}}$ with an $r$ value of 1. As $r$ approaches $\infty$, the failure function approaches:
\begin{equation*}
    f_i(x) = \begin{cases}
        0, & x<x_{i_{\text{max}}}, \\
        1, & x \geq x_{i_{\text{max}}}.
    \end{cases}
\end{equation*}
This removes all uncertainty and makes the game deterministic. In this case, each player can confidently exert effort up to their $x_{i_{\text{max}}}$. Note that by allowing each player to have their own maximum effort value $x_{i_{\text{max}}}$, our game will allow us to investigate the impact of heterogeneous player abilities on the outcome of the game. Note we also assume that both players have the same parameter $r$ in their failure function for ease of analysis; as we will see, the game will already present significant complexities under this assumption. Nevertheless, we will be able to obtain substantial insights that we anticipate will carry forward to more general settings. Finally, we assume failure for each player is independent of each other.

\section{Optimal Strategies for each Player}\label{sec:Optimal_Strategies}

We will now describe the utility functions for each player in this game, and provide the corresponding optimal strategies for each player.  

\subsection{Player 1 Analysis}\label{subsec:player_1_analysis}
Player 1 moves second and will make their move with full knowledge of the effort invested and resulting outcome (success or failure) of Player 2. Thus, we begin by looking at the best response of Player 1 to any strategy chosen by Player 2. If Player 2 successfully invests some effort $x_2$, Player 1's expected utility 
is given by the following piecewise function:
\begin{multline}
E[u_1(x)]\\=
\begin{cases}
W_2-c_1f_1(x),&0\le x<x_2 \\
W_1(1-f_1(x))+(W_2-c_1)f_1(x),&x\ge x_2
\end{cases}.
\label{eqn:player_1_utility}
\end{multline}
Specifically, if Player 2 has successfully invested $x_2$, then for any investment $0 \le x < x_2$ by Player 1, they will receive a prize of $W_2$ (for losing), and pay a failure cost of $c_1$ in the event that they fail (which happens with probability $f_1(x)$). This yields the expected utility in the first case shown above.  On the other hand, if Player 1 chooses to invest an effort $x \ge x_2$, then they will win with probability $1-f_1(x)$ and receive the prize $W_1$, or fail with probability $f_1(x)$ and pay the failure cost of $c_1$ while winning the second place prize of $W_2$. This yields the expected utility shown in the second case above.

The following theorem provides the optimal strategy for Player 1 to maximize their expected utility~\eqref{eqn:player_1_utility}.

\begin{theorem}\label{thrm:player_1_strategy}
Define $\bar{c}_1 \triangleq \frac{c_1}{W_1-W_2}$.  If Player 2 successfully invests an effort of $x_2$, Player 1 should invest the following to maximize their utility:
\begin{align*}
x_1^*=
\begin{cases}
x_2, & x_2 < x_{\text{threshold}}, \\
0, & \text{otherwise}, \\
\end{cases}
\end{align*}
\text{where }
\begin{equation}
x_{\text{threshold}} \triangleq {x_{1_{\text{max}}}}\sqrt[r]{\frac{1}{1+\bar{c}_1}} \enspace.
\label{eqn:threshold_effort}
\end{equation}
\end{theorem}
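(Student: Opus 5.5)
The proof maximizes each branch of the piecewise utility~\eqref{eqn:player_1_utility} separately and then compares the two branch maxima.

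\emph{Losing branch, $0\le x<x_2$.} Here $E[u_1(x)]=W_2-c_1f_1(x)$. Since $f_1$ is nondecreasing with $f_1(0)=0$, this branch is maximized at $x=0$, with value $W_2$. Folding therefore always guarantees $W_2$. (If $x_2=0$ this branch is empty, but $x=0$ then lies in the second branch and yields $W_1$, which is consistent with the formula $x_1^*=x_2=0$.)

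\emph{Winning branch, $x\ge x_2$.} Rewrite the utility as
\[
E[u_1(x)]=W_1-(W_1-W_2+c_1)f_1(x).
\]
Because $W_1-W_2+c_1>0$ and $f_1$ is nondecreasing, the utility is nonincreasing in $x$, so it is maximized at the smallest admissible effort, $x=x_2$. The value there is $W_1-(W_1-W_2+c_1)f_1(x_2)$. Monotonicity is strict on $[0,x_{1_{\text{max}}})$, so $x_2$ is the unique maximizer on this branch whenever $x_2<x_{1_{\text{max}}}$.

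\emph{Comparing the two branches.} Player~1 prefers matching to folding exactly when
\[
W_1-(W_1-W_2+c_1)f_1(x_2)>W_2,
\qquad\text{i.e.}\qquad
f_1(x_2)<\frac{W_1-W_2}{W_1-W_2+c_1}=\frac{1}{1+\bar c_1}.
\]
We split on where $x_2$ falls.

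If $x_2\ge x_{1_{\text{max}}}$, then $f_1(x_2)=1$. The condition fails, since $1/(1+\bar c_1)\le 1$ and $\bar c_1\ge 0$. Folding is weakly optimal, and this agrees with the claim because $x_{\text{threshold}}\le x_{1_{\text{max}}}$.

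If $x_2<x_{1_{\text{max}}}$, the condition reads $(x_2/x_{1_{\text{max}}})^r<1/(1+\bar c_1)$. Since $t\mapsto t^{1/r}$ is strictly increasing on $\mathbb{R}_{\ge0}$ for $r\ge1$, this is equivalent to $x_2<x_{1_{\text{max}}}\sqrt[r]{1/(1+\bar c_1)}=x_{\text{threshold}}$, which gives the two cases of the theorem.

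The only real obstacle is the boundary and tie handling. At $x_2=x_{\text{threshold}}$ the two options give equal utility, so the assignment $x_1^*=0$ there is one valid choice among optimal responses. The edge case $c_1=0$ gives $x_{\text{threshold}}=x_{1_{\text{max}}}$; there the two branches tie at $x_2=x_{1_{\text{max}}}$, again consistent with the theorem. I would state these tie conventions explicitly in the proof.
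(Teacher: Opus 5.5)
Your proposal is correct and follows essentially the same route as the paper. You maximize each branch of the piecewise utility (at $x=0$ and $x=x_2$), compare $W_2$ with $W_1-(W_1-W_2+c_1)f_1(x_2)$ to obtain $f_1(x_2)<\frac{1}{1+\bar{c}_1}$, and invert $f_1$; your explicit handling of $x_2\ge x_{1_{\text{max}}}$ (where the paper's use of $f_1^{-1}$ is not strictly justified), of the tie at $x_2=x_{\text{threshold}}$, and of $x_2=0$ tightens the argument without changing it.
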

\begin{proof}
Suppose Player 2 has successfully invested a nonzero effort of $x_2$. Examining Player 1's expected utility~\eqref{eqn:player_1_utility}, we see that it is decreasing in $x$ for each of the two ranges; thus, the value of $x$ that maximizes the utility in each of the two ranges is given by $x = 0$ (for $0 \le x < x_2$) and $x = x_2$ (for $x \ge x_2$).  Thus, the first case captures folding, and the second case indicates challenging by matching Player 2's successfully invested effort.

We will compare the expected utility of the optimal effort of each case from equation~\eqref{eqn:player_1_utility} to determine when challenging yields the higher expected utility.  Specifically, from~\eqref{eqn:player_1_utility}, when $x = 0$, the expected utility is given by $W_2$ (the losing prize), and when $x = x_2$, the expected utility is $W_1(1-f_1(x_2))+(W_2-c_1)f_1(x_2)$.  Thus, challenging (by playing $x = x_2$) yields the higher expected utility when
\begin{align*}
&W_2 < W_1(1-f_1(x_2))+(W_2-c_1)f_1(x_2) \nonumber\\
\Leftrightarrow\quad &f_1(x_2)(W_1-W_2+c_1)<W_1-W_2 \nonumber \\
\Leftrightarrow\quad &f_1(x_2)<\frac{1}{1+\bar{c}_1},
\end{align*}
where $\bar{c}_1=\frac{c_1}{W_1-W_2}$.
This is the condition at which Player 1 will challenge Player 2's investment. Using~\eqref{eqn:failure function}, we can solve for the threshold level of Player 2's effort that will determine whether Player 1 will challenge or fold as follows:
\begin{align*}
x_{\text{threshold}}&=f_1^{-1}\left(\frac{1}{1+\bar{c}_1}\right)=x_{1_{\text{max}}}\sqrt[r]{\frac{1}{1+\bar{c}_1}}.
\end{align*}
\end{proof}

{\bf Key insights:} Theorem~\ref{thrm:player_1_strategy} shows if Player 2 is able to invest the threshold effort $x_{\text{threshold}}$ without failing, they will win the game because Player 1 will fold. Player 1 will challenge if the effort Player 2 invests is lower than this threshold value. In this scenario, Player 2 will only win if Player 1 fails to invest that effort. Recall that $\bar{c}_1$ is the ratio of Player 1's failure cost to the relative gain for winning. As this ratio increases, Player 2 has to successfully invest a smaller level of effort in order to force Player 1 to fold. Similarly, note that as $\bar{c}_1\to0$, $x_{\text{threshold}}$ approaches the point $x_{1_{\text{max}}}$ where failure is certain, $f_1(x_{\text{threshold}})=1$ (i.e., when Player 1 does not care about failing, they will push to their maximum possible effort to win). Finally, note that as $r$ increases (i.e., the failure function is relatively flat until the effort is very close to $x_{i_{\text{max}}}$), Player 1's threshold for challenging approaches $x_{1_{\text{max}}}$, as expected.

\subsection{Player 2 Analysis}\label{subsec:player_2_analysis}
Having characterized Player 1's optimal response to Player 2's exerted effort, we will now provide Player 2's utility (anticipating Player 1's response).  

Recall from Theorem~\ref{thrm:player_1_strategy} that Player 1's best strategy will be to either match Player 2's effort or fold and invest 0 effort. This choice will be based solely on whether Player 2's invested effort is greater than the threshold effort $x_{\text{threshold}}$ given in~\eqref{eqn:threshold_effort}. Utilizing this result, the expected utility for Player 2 attempting to invest some effort $x_2$ is given by the following piecewise function:
\begin{multline*}
E[u_2(x_2)] \\= 
\begin{cases}
W_1 f_1(x_2)(1 - f_2(x_2)) \\
\quad + W_2 (1 - f_1(x_2))(1 - f_2(x_2)) \\
\quad + (W_2 - c_2) f_2(x_2), 
&x_2 < x_{\text{threshold}}, \\
W_1 (1 - f_2(x_2)) \\
\quad+ (W_2 - c_2) f_2(x_2), 
&x_2 \ge x_{\text{threshold}}.
\end{cases}
\end{multline*}
Specifically, if Player 2 successfully invests $x_2 < x_{\text{threshold}}$, then Player 2 will receive $W_1$ if they succeed (which happens with probability $1-f_2(x_2)$) and Player 1 fails (which happens with probability $f_1(x_2)$). Player 2 will receive $W_2$ with this investment if neither of the players fail or Player 2 fails, in which case they also pay a failure cost of $c_2$. This yields the expected utility in the first case shown above. Conversely, if Player 2 attempts to invest $x_2\ge x_{\text{threshold}}$, they will succeed and receive $W_1$ with probability $1-f_2(x_2)$ or fail and receive $W_2$ with probability $f_2(x_2)$ and pay a failure cost of $c_2$. Recall from Theorem~\ref{thrm:player_1_strategy} that Player 1 will not challenge because $x_2 \ge x_\text{threshold}$.

For convenience, we define the quantity \begin{align*}
    \bar{u}_2(x_2) \triangleq \frac{E[u_2(x_2)]-W_2}{W_1-W_2},
\end{align*}
and note that $\bar{u}_2(x_2)$ has the same maximizer as $E[u_2(x_2)]$ (since subtracting a constant and scaling does not change the maximizer of a function).  Further defining $\bar{c}_2 \triangleq \frac{c_2}{W_1-W_2}$, we have
\begin{multline}
\bar{u}_2(x_2)\\ =
\begin{cases}
f_1(x_2)(1-f_2(x_2))\\
\qquad-\bar{c}_2f_2(x_2), &0\le x_2<x_{\text{threshold}}, \\ 
1-f_2(x_2)-\bar{c}_2f_2(x_2), &x_2\ge x_{\text{threshold}}.
\end{cases}
\label{eqn:player_2_utility}
\end{multline}

Player 2's optimal strategy is therefore obtained by maximizing this function.  The following theorem (our main result in this paper) completely characterizes this optimal strategy for Player 2.

\begin{theorem}\label{thrm:player_2_strategy}
Define the quantities $\bar{c}_1 \triangleq \frac{c_1}{W_1-W_2}$, $\bar{c}_2 \triangleq \frac{c_2}{W_1-W_2}$ and $\alpha \triangleq \left(\frac{x_{1_{\text{max}}}}{x_{2_{\text{max}}}}\right)^r$.  
Player 2's optimal strategy ($x_2^*$) is given as follows.

\noindent If  $\bar{c}_1\bar{c}_2 \ge 1$:
\begin{equation*}\label{equationc1c2case1}
x_2^* =
\begin{cases}
x_{1_{\text{max}}}\sqrt[r]{\frac{1}{1+\bar{c}_1}}, & 0 \le \alpha < \frac{1+\bar{c}_1}{1+\bar{c}_2}, \\
0, & \frac{1+\bar{c}_1}{1+\bar{c}_2} \le \alpha.
\end{cases}
\end{equation*}
\noindent Otherwise, if $\bar{c}_1\bar{c}_2 < 1$:
\begin{equation*}
x_2^* =
\begin{cases}
x_{1_{\text{max}}}\sqrt[r]{\frac{1}{1+\bar{c}_1}}, & 0 \le \alpha < \alpha^*, \\
x_{2_{\text{max}}}\sqrt[r]{\frac{1}{2}(1 - \bar{c}_2 \alpha)}, & \alpha^* \le \alpha < \frac{1}{\bar{c}_2}, \\
0, & \frac{1}{\bar{c}_2} \le \alpha,
\end{cases}
\end{equation*}
\noindent \text{where}
\begin{equation*}
\alpha^* = \frac{
2 + \bar{c}_2 + 2\sqrt{\frac{\bar{c}_1(1+\bar{c}_2)}{1+\bar{c}_1}}
}{
4\frac{1+\bar{c}_2}{1+\bar{c}_1} + \bar{c}_2^2
}.
\end{equation*}
\end{theorem}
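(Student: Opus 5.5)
The plan is to reparametrize everything by Player 2's own failure probability, which turns \eqref{eqn:player_2_utility} into elementary functions of one variable. Set $y \triangleq f_2(x_2) = (x_2/x_{2_{\text{max}}})^r \in [0,1]$. This map is strictly increasing on $[0,x_{2_{\text{max}}}]$, and efforts $x_2 \ge x_{2_{\text{max}}}$ give $\bar{u}_2 = -\bar{c}_2 \le 0 = \bar u_2(0)$, so they can be discarded.

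For $x_2 < x_{1_{\text{max}}}$ we have $f_1(x_2) = y/\alpha$. Theorem~\ref{thrm:player_1_strategy} places the threshold at $y_t \triangleq f_2(x_{\text{threshold}}) = \alpha/(1+\bar{c}_1)$. If $y_t \ge 1$, the second branch of \eqref{eqn:player_2_utility} is unreachable. Otherwise \eqref{eqn:player_2_utility} becomes
\begin{equation*}
g(y) = \frac{y}{\alpha}(1-y) - \bar{c}_2 y \quad (0\le y<y_t), \qquad h(y) = 1-(1+\bar{c}_2)y \quad (y_t \le y \le 1).
\end{equation*}

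I would then list the candidate maximizers.

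On the second branch, $h$ is decreasing, so its best point is $y=y_t$, i.e.\ $x_2 = x_{1_{\text{max}}}\sqrt[r]{1/(1+\bar c_1)}$. Its value is $V_t \triangleq 1 - k\alpha$ with $k \triangleq \frac{1+\bar{c}_2}{1+\bar{c}_1}$, and $V_t>0$ iff $\alpha < 1/k$.

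On the first branch, $g$ is a concave quadratic with vertex $y_v = \tfrac12(1-\bar c_2\alpha)$. This vertex is the point $x_2 = x_{2_{\text{max}}}\sqrt[r]{\tfrac12(1-\bar c_2\alpha)}$ of the statement, with value $V_v \triangleq (1-\bar c_2\alpha)^2/(4\alpha)$. It is feasible and positive iff $\alpha < 1/\bar c_2$ and $y_v < y_t$. If $y_v \le 0$, the first branch is maximized at $y=0$, i.e.\ folding, with value $0$. If $y_v \ge y_t$, then $g$ is increasing up to the open endpoint $y_t$. Since $g(y_t) = \frac{1-y_t}{1+\bar c_1} - \bar c_2 y_t < 1 - y_t - \bar c_2 y_t = h(y_t)$, the threshold point dominates the whole first branch in that case.

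So $x_2^*$ is whichever of $\{0,\ y_v,\ y_t\}$ has the largest value among the admissible ones, and the proof reduces to comparing $0$, $V_v$ and $V_t$ as functions of $\alpha$.

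The key computation is the comparison $V_t \gtrless V_v$. Clearing $4\alpha$ gives the quadratic
\begin{equation*}
(4k+\bar c_2^2)\alpha^2 - (4+2\bar c_2)\alpha + 1 \;\gtrless\; 0,
\end{equation*}
whose discriminant simplifies to $4(1+\bar c_2)\bar c_1/(1+\bar c_1)$. Its larger root is exactly $\alpha^*$ of the statement. I would show that on the relevant range, namely where $y_v$ is interior and $V_v>0$, the smaller root is irrelevant: below it the vertex lies past $y_t$ and so is infeasible. The sign of the quadratic then shows that the threshold strategy wins for $\alpha<\alpha^*$ and the interior strategy wins for $\alpha>\alpha^*$. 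I expect this to be the main obstacle. It requires checking that the smaller root lies below the feasibility boundary $y_v=y_t$, i.e.\ below $\alpha = \bigl(2/(1+\bar c_1) + \bar c_2\bigr)^{-1}$, and that $\alpha^*$ itself lies in $\bigl[(2/(1+\bar c_1)+\bar c_2)^{-1},\, 1/\bar c_2\bigr)$. I would first try to verify these by plugging the boundary values into the quadratic and reading off its sign.

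Finally I would split on $\bar c_1\bar c_2$. The comparison between the two folding boundaries is $1/k \le 1/\bar c_2 \iff \bar c_2(1+\bar c_1) \le 1+\bar c_2 \iff \bar c_1\bar c_2 \le 1$.

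If $\bar c_1\bar c_2 \ge 1$, I would show that the interior candidate is never optimal. Either it is infeasible, or $\alpha^* \ge 1/k$ so that both $V_v$ and $V_t$ are nonpositive, or the quadratic shows $V_t \ge V_v$. The answer is then threshold for $\alpha<1/k$ and fold for $\alpha \ge 1/k$.

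If $\bar c_1\bar c_2<1$, then $\alpha^* < 1/\bar c_2$ and the three regimes threshold/interior/fold occur in that order, with fold starting where $V_v$ hits $0$, at $\alpha = 1/\bar c_2$.

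Ties at the boundaries are assigned as in the statement, and I would check that the case $y_t \ge 1$ (i.e.\ $\alpha \ge 1+\bar c_1$) falls in the fold or interior regime consistently with the formulas.
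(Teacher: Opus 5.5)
Your proposal is correct and is essentially the paper's argument: the same candidates on each branch, the same comparison quadratic $Q(\alpha)=(4k+\bar c_2^2)\alpha^2-(4+2\bar c_2)\alpha+1$ (with $V_t-V_v=-Q(\alpha)/(4\alpha)$ and larger root $\alpha^*$), the same sign checks at $\alpha=0$, $\alpha_0=\bigl(\frac{2}{1+\bar c_1}+\bar c_2\bigr)^{-1}$ and $1/\bar c_2$ as in Lemma~\ref{lma:root_calculation} and the Appendix, and the same split on $\bar c_1\bar c_2$, while your substitution $y=f_2(x_2)$ makes concavity and the $y_t\ge 1$ edge case cleaner (the paper does not treat that case explicitly) and your inequality $g(y_t)\le h(y_t)$ at $\alpha=\alpha_0$, where $y_v=y_t$, is exactly the Appendix's $Q(\alpha_0)\le 0$. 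One slip to fix: your claim $\alpha^*\in[\alpha_0,1/\bar c_2)$ holds only when $\bar c_1\bar c_2<1$, and for $\bar c_1\bar c_2\ge 1$ the fact you need is $\alpha^*\ge 1/\bar c_2$, not $\alpha^*\ge 1/k$ (in fact $1/\bar c_2\le\alpha^*\le 1/k$); this follows from $Q(1/\bar c_2)=4(k-\bar c_2)/\bar c_2^2\le 0$ and gives $V_t>V_v$ on the whole window $(\alpha_0,1/\bar c_2)$ where the vertex is admissible (also, $g(y_t)<h(y_t)$ should be $\le$, with equality when $\bar c_1=0$, which is harmless since that branch is open at $y_t$).
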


The proof of the above theorem requires careful analysis of several conditions, and thus we will build towards the proof in the following subsection.  

{\bf Key insights: } Theorem~\ref{thrm:player_2_strategy} provides key insights into Player 2's optimal strategy in this game. The quantity $\alpha$ captures the relative abilities of the two players with $\alpha < 1$ if Player $1$ is weaker than Player 2 (i.e., $x_{1_{\text{max}}} < x_{2_{\text{max}}}$) and vice versa. We assume that Player 2 has full knowledge of $x_{1_{\text{max}}}$. There are three distinct possibilities for Player 2's optimal strategy that correspond with a range of values for $\alpha$. In the lower range of $\alpha$ (namely $0\le\alpha<\frac{1+\bar{c}_1}{1+\bar{c}_2}$ for $\bar{c}_1\bar{c}_2\ge1$ and $0\le\alpha<\alpha^*$ for $\bar{c}_1\bar{c}_2<1$), Player 2 will choose the value for $x_\text{threshold}$ from Theorem~\ref{thrm:player_1_strategy}. This corresponds to the subset of games where Player 1 is sufficiently weaker than Player 2, so Player 2 can confidently invest a sufficiently high effort such that Player 1 will not benefit from challenging.
If $\alpha$ is sufficiently large however (i.e., $\alpha\ge\frac{1+\bar{c}_1}{1+\bar{c}_2}$ for $\bar{c}_1\bar{c}_2\ge1$ and $\alpha\ge\frac{1}{\bar{c}_2}$ for $\bar{c}_1\bar{c}_2<1$), then Player 2 will fold. In this subset of games, Player 1 is so much better than Player 2 that no investment by Player 2 will yield a chance of winning $W_1$ that offsets the cost of failure. From an alternative perspective, as $\bar{c}_2\to\infty$ (with finite $\bar{c}_1$), both $\frac{1+\bar{c}_1}{1+\bar{c}_2}$ and $\frac{1}{\bar{c}_2}$ go to zero. This means Player 2's cost of failure is so high that even in a game where they are significantly stronger than Player 1, they would not chance paying such a high failure cost.

Only for $\bar{c}_1\bar{c}_2<1$ will there be a middle range of $\alpha$ where neither player will fold. At the boundary $\alpha=\alpha^*$, Player 2 receives the same utility from choosing $\hat{x}$ in \eqref{eqn:x_hat} or $x_\text{threshold}$ and inducing Player 1 to fold. We assume that Player 2 chooses $\hat{x}$ when indifferent. We note that $\bar{c}_1=1, \bar{c}_2=1$ is a case that $\bar{c}_1\bar{c}_2=1$ where each player's failure cost is equal to the utility gained by winning $W_1$ over $W_2$. We see that if these quantities are sufficiently high (as is in the case of $\bar{c}_1\bar{c}_2\ge1$), for any value of $\alpha$, at least one of the players will not be willing to chance failure, thus folding. We conclude that there is a seemingly narrow range of game parameters where neither player will fold, which is the case when the full range of failure costs is considered. In competitive settings where the additional reward from winning exceeds the cost associated with failure, $\bar{c}_1$ and $\bar{c}_2$ will usually be less than 1, making $\bar{c}_1\bar{c}_2<1$. Additionally, this intermediate range of $\alpha$ can arise when players are evenly matched. However, it is important to note that large values of $r$ will cause drastic changes in $\alpha$ for small differences in $x_{1_\text{max}}$ and $x_{2_\text{max}}$. As $r\to\infty$ (i.e., players are able to invest close to their maximum effort with little chance of failure), $\alpha$ will go to $\infty$ if $x_{1_\text{max}}>x_{2_\text{max}}$ and 0 if $x_{1_\text{max}}<x_{2_\text{max}}$.

\subsection*{Building the Proof of Theorem~\ref{thrm:player_2_strategy}}
We will analyze Player 2's utility~\eqref{eqn:player_2_utility} to identify the optimal strategy for each of the two ranges.  Specifically, we will establish a set of lemmas that characterize the optimal strategy (and corresponding utility) in each case.  We start with the following lemma focusing on the simpler case of $x_2 \ge x_{\text{threshold}}$.
\begin{lemma}\label{lma:x_thresh}
In the range $x_2 \ge x_{\text{threshold}}$, the quantity $\bar{u}_2(x_2)$ is maximized at $x_2=x_{\text{threshold}}$ and offers a maximal expected utility of:
\begin{equation}
    \bar{u}_2(x_{\text{threshold}})=1-\alpha\left(\frac{1+\bar{c}_2}{1+\bar{c}_1}\right).
\label{eqn:u_two_x_thresh_w_alpha}
\end{equation}
This expected utility is positive if and only if
\begin{equation}
    \alpha < \frac{1+\bar{c}_1}{1+\bar{c}_2}.
\label{eqn:xthresh_lt_xopt_cond}
\end{equation}
\end{lemma}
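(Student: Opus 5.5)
On the range $x_2 \ge x_{\text{threshold}}$, the second branch of \eqref{eqn:player_2_utility} reads $\bar{u}_2(x_2) = 1-(1+\bar{c}_2)f_2(x_2)$. The plan has three steps: show this is maximized at the left endpoint, evaluate it there in terms of $\alpha$, and read off the sign condition.

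\emph{Monotonicity.} From \eqref{eqn:failure function}, $f_2$ is nondecreasing on $\mathbb{R}_{\ge 0}$. It is strictly increasing on $[0,x_{2_{\text{max}}})$ and constant equal to $1$ afterwards. Since $1+\bar{c}_2>0$, the expression $1-(1+\bar{c}_2)f_2(x_2)$ is nonincreasing in $x_2$. It is therefore maximized over $[x_{\text{threshold}},\infty)$ at $x_2=x_{\text{threshold}}$. This endpoint lies in the range, because the branch is defined with $\ge$.

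\emph{Evaluating $f_2(x_{\text{threshold}})$.} Substituting \eqref{eqn:threshold_effort} into the power branch of \eqref{eqn:failure function} gives
\begin{equation*}
f_2(x_{\text{threshold}})=\left(\frac{x_{1_{\text{max}}}}{x_{2_{\text{max}}}}\right)^r\frac{1}{1+\bar{c}_1}=\frac{\alpha}{1+\bar{c}_1}.
\end{equation*}
This step is the main obstacle, because the formula is only valid when $x_{\text{threshold}}<x_{2_{\text{max}}}$, i.e., when $\alpha<1+\bar{c}_1$. In that case \eqref{eqn:u_two_x_thresh_w_alpha} follows directly. Otherwise $f_2(x_{\text{threshold}})=1$, so the true value is $-\bar{c}_2$. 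The displayed formula then equals $1-\alpha\frac{1+\bar{c}_2}{1+\bar{c}_1}\le -\bar{c}_2$, so it gives only an upper bound and is exact only at $\alpha=1+\bar{c}_1$.

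I would therefore state \eqref{eqn:u_two_x_thresh_w_alpha} for the regime $\alpha<1+\bar{c}_1$, or equivalently read it as $1-(1+\bar{c}_2)\min\{1,\alpha/(1+\bar{c}_1)\}$. I would also note that the second regime does not affect the positivity claim.

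\emph{Positivity.} If $\alpha<1+\bar{c}_1$, then $\bar{u}_2(x_{\text{threshold}})>0$ if and only if $\alpha(1+\bar{c}_2)<1+\bar{c}_1$, which is \eqref{eqn:xthresh_lt_xopt_cond}. If $\alpha\ge 1+\bar{c}_1$, then the true utility is $-\bar{c}_2\le 0$, so it is not positive. Condition \eqref{eqn:xthresh_lt_xopt_cond} also fails in this case, since $\alpha\ge 1+\bar{c}_1\ge \frac{1+\bar{c}_1}{1+\bar{c}_2}$. Hence the equivalence holds for all $\alpha\ge 0$, which proves the lemma.
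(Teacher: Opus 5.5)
Your proof is correct and follows the same route as the paper: the second branch of \eqref{eqn:player_2_utility} is nonincreasing because $f_2$ is, and substituting $x_{\text{threshold}}$ into $f_2$ gives the value. Your caveat is a genuine correction that the paper omits. When $\alpha>1+\bar{c}_1$ one has $x_{\text{threshold}}>x_{2_{\text{max}}}$, so the true value is $-\bar{c}_2$ and \eqref{eqn:u_two_x_thresh_w_alpha} holds only for $\alpha\le 1+\bar{c}_1$. The positivity equivalence still holds for all $\alpha\ge 0$, as you show.

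One small slip: in that regime the displayed formula is $\le -\bar{c}_2$. It therefore \emph{underestimates} the true utility and is a lower bound on it, not an upper bound.
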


\begin{proof}
From the second case (corresponding to $x_2 \ge x_{\text{threshold}}$) in~\eqref{eqn:player_2_utility}, we see that $\bar{u}_2(x_2)$ is decreasing in $x_2$ (since the failure function $f_2(x_2)$ is increasing in $x_2$). Therefore, the optimal value is the smallest $x_2$ that satisfies this case, namely $x_2 = x_{\text{threshold}}$. The value for $x_{\text{threshold}}$ from~\eqref{eqn:threshold_effort} can be substituted into the failure function~\eqref{eqn:failure function} and Player 2's utility~\eqref{eqn:player_2_utility} to get 
\begin{align*}
f_2(x_{\text{threshold}})&= \left(\frac{x_{\text{threshold}}}{x_{2_{\text{max}}}}\right)^r=\alpha\left(\frac{1}{1+\bar{c}_1}\right), \nonumber \\
\bar{u}_2(x_{\text{threshold}})&=1-(1+\bar{c}_2)\alpha\left(\frac{1}{1+\bar{c}_1}\right)\nonumber\\
&=1-\alpha\left(\frac{1+\bar{c}_2}{1+\bar{c}_1}\right).
\end{align*}
This is clearly positive if and only if condition~\eqref{eqn:xthresh_lt_xopt_cond} is satisfied. 
\end{proof}

The next lemma considers the optimal value of Player 2's utility~\eqref{eqn:player_2_utility} in the first case (namely $0 \le x_2 < x_{\text{threshold}}$).  

\begin{lemma}\label{lma:x_hat}
If $\alpha \le \frac{1}{\frac{2}{1+\bar{c}_1}+\bar{c}_2}$, the function
$\bar{u}_2(x_2)$ is increasing in $x_2$ in the range $0\le x_2 <x_{\text{threshold}}$, and the optimal value of $\bar{u}_2(x_2)$ is $x_\text{threshold}$ as defined in Theorem~\ref{thrm:player_1_strategy}.  Otherwise, the maximizer $\hat{x}_2^*$ of 
$\bar{u}_2(x_2)$ in the range $0\le x_2 <x_{\text{threshold}}$ occurs at:
\begin{align*}
    \hat{x}_2^* =
    \begin{cases}
        \hat{x}, & \alpha < \frac{1}{\bar{c}_2},\\
        0, & \text{otherwise},
        \label{eqn:optimal_x}
    \end{cases}
\end{align*}
where \begin{equation}
    \hat{x} \triangleq x_{2_{\text{max}}}\sqrt[r]{\frac{1}{2}(1-\bar{c}_2\alpha)} \enspace .
\label{eqn:x_hat}
\end{equation} 
The corresponding utility is given by
\begin{equation*}
    \bar{u}_2(\hat{x}_2^*)=
    \begin{cases}
        \frac{1}{4\alpha}(1-\alpha\bar{c}_2)^2, & \alpha < \frac{1}{\bar{c}_2}, \\
        0, & \text{otherwise}.
    \end{cases}
\end{equation*}
\end{lemma}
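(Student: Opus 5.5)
The plan is to reduce $\bar{u}_2$ on $0 \le x_2 < x_{\text{threshold}}$ to a concave quadratic by changing variables. Put $t \triangleq (x_2/x_{2_{\text{max}}})^r$, a strictly increasing function of $x_2$.

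First, since $x_{\text{threshold}} \le x_{1_{\text{max}}}$ by \eqref{eqn:threshold_effort}, every $x_2$ in this range satisfies $x_2 < x_{1_{\text{max}}}$. Hence \eqref{eqn:failure function} gives $f_1(x_2) = (x_2/x_{1_{\text{max}}})^r = t/\alpha$.

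Next, as long as $x_2 < x_{2_{\text{max}}}$ we have $f_2(x_2) = t$. The first case of \eqref{eqn:player_2_utility} then becomes
\begin{equation*}
g(t) \triangleq \frac{t}{\alpha}(1-t) - \bar{c}_2 t = \frac{1}{\alpha}\left(t(1-\bar{c}_2\alpha) - t^2\right).
\end{equation*}
The range $x_2 < x_{\text{threshold}}$ corresponds to $0 \le t < t_{\text{th}} \triangleq \alpha/(1+\bar{c}_1)$, which is the computation already done in the proof of Lemma~\ref{lma:x_thresh}.

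If $t_{\text{th}} > 1$, i.e.\ $x_{\text{threshold}} > x_{2_{\text{max}}}$, there is an extra portion $x_{2_{\text{max}}} \le x_2 < x_{\text{threshold}}$. There $f_2 = 1$, so $\bar{u}_2 = -\bar{c}_2 \le 0$. This portion can never beat the value $0$ obtained at $x_2 = 0$, so it can be discarded. Formally, it amounts to restricting $g$ to $t \in [0, \min(1, t_{\text{th}}))$.

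The function $g$ is a concave parabola. Its unconstrained maximizer is $t^* = \tfrac{1}{2}(1-\bar{c}_2\alpha)$, with value $g(t^*) = \frac{1}{4\alpha}(1-\bar{c}_2\alpha)^2$. Note that $t^* \le \tfrac12 < 1$, so the cap at $1$ never binds at $t^*$. This leads to three cases.

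\emph{Case 1: $t^* \ge t_{\text{th}}$.} Then $g$ is increasing on the whole interval $[0, t_{\text{th}})$. Rearranging $\tfrac{1}{2}(1-\bar{c}_2\alpha) \ge \alpha/(1+\bar{c}_1)$ gives exactly $\alpha \le \bigl(\tfrac{2}{1+\bar{c}_1} + \bar{c}_2\bigr)^{-1}$. In this case $\bar{u}_2$ only approaches its supremum as $x_2 \uparrow x_{\text{threshold}}$, and that supremum is not attained on the open interval. I would compare the left limit with the value at $x_{\text{threshold}}$ from Lemma~\ref{lma:x_thresh}. At $t = t_{\text{th}}$, the first branch equals $\frac{1-t}{1+\bar{c}_1} - \bar{c}_2 t$ and the second equals $1 - t - \bar{c}_2 t$. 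The second is at least the first, so taking $x_2 = x_{\text{threshold}}$ is optimal.

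\emph{Case 2: $t^* < t_{\text{th}}$ and $\alpha < 1/\bar{c}_2$.} Then $0 < t^* < t_{\text{th}}$, so the maximizer is interior. Inverting $t^* = (x/x_{2_{\text{max}}})^r$ gives $\hat{x}$ in \eqref{eqn:x_hat}, with the stated utility $g(t^*)$.

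\emph{Case 3: $t^* < t_{\text{th}}$ and $\alpha \ge 1/\bar{c}_2$.} Then $t^* \le 0$, so $g$ is decreasing on $[0, t_{\text{th}})$. The maximizer is $x_2 = 0$ with utility $0$.

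The main obstacle is bookkeeping rather than analysis. It consists of the boundary behaviour in Case 1, where the optimum of the open interval is not attained and one must pass to $x_{\text{threshold}}$ via Lemma~\ref{lma:x_thresh}, and the possible regime $x_{\text{threshold}} > x_{2_{\text{max}}}$, where $f_2$ is capped at $1$. I would handle both by the explicit comparisons above. The remaining steps are direct algebra.
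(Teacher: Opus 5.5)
Your proof is correct, and it reaches the lemma by a somewhat different route than the paper.

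The paper differentiates $\bar{u}_2$ directly in $x_2$, divides out the common factor $x_2^{r-1}$, and solves the first-order condition to get $\hat{x}$. It then decides among $\hat{x}$, $0$ and $x_{\text{threshold}}$ by comparing $\hat{x}$ with $x_{\text{threshold}}$, which is the same inequality you derive in Case 1. Along the way it leaves three things implicit:
\begin{itemize}
\item it never argues that the stationary point is a global maximum on the interval;
\item it tacitly uses $f_2(x_2)=(x_2/x_{2_{\text{max}}})^r$ on all of $[0,x_{\text{threshold}})$;
\item in the first case it simply asserts that $x_{\text{threshold}}$ is optimal.
\end{itemize}

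Your substitution $t=(x_2/x_{2_{\text{max}}})^r$ turns the first branch into the concave parabola $g(t)$. The maximizer, its value, and the monotonicity claims in Cases 1 and 3 then follow at once from where the vertex $t^*$ sits relative to $[0,t_{\text{th}})$.

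The same setup lets you close exactly the three points the paper skips:
\begin{itemize}
\item The regime $x_{\text{threshold}}>x_{2_{\text{max}}}$, where $f_2\equiv 1$, is harmless because the utility there is $-\bar{c}_2\le 0$ and $t^*\le\tfrac12$.
\item In Case 1 you note explicitly that the supremum over the half-open interval is not attained.
\item The boundary comparison $(1-t_{\text{th}})\bigl(1-\tfrac{1}{1+\bar{c}_1}\bigr)\ge 0$ shows that the second branch of $\bar{u}_2$ at $x_{\text{threshold}}$ dominates the left limit of the first branch, so passing to $x_{\text{threshold}}$ is justified.
\end{itemize}

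The paper's computation stays closer to the original variable. Your version gains global optimality and full case coverage at essentially no extra cost.
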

\begin{proof}
From~\eqref{eqn:player_2_utility}, the range of $x_2$ considered in this lemma means Player 2's utility takes the following form:
\begin{equation*}
    \bar{u}_2(x_2) = f_1(x_2)(1-f_2(x_2))-\bar{c}_2f_2(x_2).
\end{equation*}
We first compute the derivative of this utility and set it equal to zero to find the critical points. 
\begin{align*} 
0&=f_1'(x_2)-f_2(x_2)f_1'(x_2)-f_1(x_2)f_2'(x_2)-\bar{c}_2f_2'(x_2)\nonumber\\
&=(1-f_2(x_2))f_1'(x_2)-(f_1(x_2)+\bar{c}_2)f_2'(x_2)\\
&=\left(1-\frac{x_2^r}{x_{2_{\text{max}}}^r}\right)\left(\frac{r}{x_{1_{\text{max}}}}\right)\left(\frac{x_2^{r-1}}{x_{1_{\text{max}}}^{r-1}}\right)\nonumber\\
&\qquad-\left[\left(\frac{x_2^r}{x_{1_{\text{max}}}^r}+\bar{c}_2\right)\left(\frac{r}{x_{2_{\text{max}}}}\right)\left(\frac{x_2^{r-1}}{x_{2_{\text{max}}}^{r-1}}\right)\right]\nonumber\\
0 &=\left(1-\frac{x_2^r}{x_{2_{\text{max}}}^r}\right)\frac{1}{x_{1_{\text{max}}}^r}-\left[\left(\frac{x_2^r}{x_{1_{\text{max}}}^r}+\bar{c}_2\right)\frac{1}{x_{2_{\text{max}}}^r}\right].\nonumber\\
\end{align*}
In the last expression above, we divided out $x^{r-1}$, making 0 a possible optimizer if $r\neq1$.  Now, multiplying both sides by $x_{1_{\text{max}}}^r$, we obtain
\begin{align*}
0&=\left(1-\frac{x_2^r}{x_{2_{\text{max}}}^r}\right)-\alpha\left(\frac{x_2^r}{x_{1_{\text{max}}}^r}+\bar{c}_2\right)\nonumber\\
&=1-\frac{x_2^r}{x_{2_{\text{max}}}^r}-\frac{x_2^r}{x_{2_{\text{max}}}^r}-\alpha\bar{c}_2,\nonumber\\
\Leftrightarrow x_2^r&=\frac{x_{2_{\text{max}}}^r\left(1-\alpha\bar{c}_2\right)}{2}.\nonumber
\end{align*}
Denote 
\[
\hat{x} \triangleq x_{2_{\text{max}}}\sqrt[r]{\frac{1}{2}\left(1-\alpha\bar{c}_2\right)},
\]
as the solution to this equation. Thus, the optimizer in this region, $\hat{x}_2^*$, will be either \eqref{eqn:x_hat} or 0. If $\hat{x}_2^*$ is in the range $0\le\hat{x}<x_\text{threshold}$, it will be the optimizer of $\bar{u}_2(x)$ in that region. The value $\hat{x}$ will only be a valid solution if $\alpha\le\frac{1}{\bar{c}_2}$, otherwise, 0 will be the optimizer over $0\le x<x_\text{threshold}$. We note that at $\alpha=\frac{1}{\bar{c}_2}$, $\hat{x}=0$, so there is continuity in $\hat{x}_2^*$ over $\alpha$. We now compare the value of $\hat{x}$ and $x_{\text{threshold}}$ to obtain the condition on $\alpha$ where $\hat{x}_2^*$ is in the range $0\le\hat{x}<x_\text{threshold}$:
\begin{align}\label{eqn:xhat_smaller_than_xth}
&x_{2_{\text{max}}}\sqrt[r]{\frac{1}{2}(1-\bar{c}_2\alpha)}<{x_{1_{\text{max}}}}\sqrt[r]{\frac{1}{(1+\bar{c}_1)}} \nonumber\\
&\Leftrightarrow x_{2_{\text{max}}}^r\frac{1}{2}(1-\bar{c}_2\alpha)<{x_{1_{\text{max}}}^r}\frac{1}{(1+\bar{c}_1)}\nonumber\\
&\Leftrightarrow \frac{1}{2\alpha}(1-\bar{c}_2\alpha)<\frac{1}{(1+\bar{c}_1)}\nonumber\\
&\Leftrightarrow \frac{1}{\alpha}-\bar{c}_2<\frac{2}{(1+\bar{c}_1)}\nonumber\\
&\Leftrightarrow \alpha>\frac{1}{\frac{2}{1+\bar{c}_1}+\bar{c}_2}. 
\end{align}
This value is clearly smaller than $\frac{1}{\bar{c}_2}$, so if $\alpha\le\frac{1}{\frac{2}{1+\bar{c}_1}+\bar{c}_2}$, we have $\hat{x}_2^*=\hat{x}\ge x_\text{threshold}$. For that case $\bar{u}_2(x_2)$ is increasing with $x_2$ in the range $0<x<x_\text{threshold}$, and $\hat{x}_2^*$ is $x_\text{threshold}$ as defined in Theorem~\ref{thrm:player_1_strategy}. 

The utility $\bar{u}_2(\hat{x}_2^*)$ is zero when $\hat{x}_2^* = 0$, so we would like to find the resulting value of $\bar{u}_2(\hat{x}_2^*)$ for the range of $\alpha$ where $\hat{x}_2^*=\hat{x}$. We evaluate $\bar{u}_2(\hat{x})$ to find:
\begin{align*}
f_1(\hat{x})&= \left(\frac{\hat{x}}{x_{1_{\text{max}}}}\right)^r=\frac{1}{2}\left(\frac{1}{\alpha}-\bar{c}_2\right), \nonumber \\
f_2(\hat{x})&= \left(\frac{\hat{x}}{x_{2_{\text{max}}}}\right)^r=\frac{1}{2}(1-\alpha\bar{c}_2), \nonumber \\
\bar{u}_2(\hat{x})&=\frac{1}{2}\left(\frac{1}{\alpha}-\bar{c}_2\right)\left(1-\frac{1}{2}(1-\alpha\bar{c}_2)\right)-\frac{\bar{c}_2}{2}(1-\alpha\bar{c}_2)\nonumber\\
&=\frac{1}{4}\left(\frac{1}{\alpha}+\bar{c}_2-\bar{c}_2-\alpha\bar{c}_2^2\right)-\frac{\bar{c}_2}{2}+\frac{\alpha\bar{c}_2^2}{2}\nonumber\\
&=\frac{1}{4\alpha}(1-2\alpha\bar{c}_2+\alpha^2\bar{c}_2^2).\nonumber
\end{align*}
We thus 
get the following equation for $\bar{u}_2(\hat{x})$ when $\alpha<\frac{1}{\bar{c}_2}$:
\begin{equation} \label{eqn:u2_xhat_alpha_less_1_c2bar}
    \bar{u}_2(\hat{x})=\frac{1}{4\alpha}(1-\alpha\bar{c}_2)^2.
\end{equation}

We see that this utility is always nonnegative. This concludes our proof of Lemma~\ref{lma:x_hat}.
\end{proof}

\begin{lemma}\label{lma:root_calculation}
In the range $\frac{1}{\frac{2}{1+\bar{c}_1}+\bar{c}_2}<\alpha\le\frac{1}{\bar{c}_2}$, if $\frac{1+\bar{c}_1}{1+\bar{c}_2}\le\frac{1}{\bar{c}_2}$, the quantities $\bar{u}_2(x_{\text{threshold}})$ and $\bar{u}_2(\hat{x})$ are equal only when $\alpha$ is given by the following value:
\begin{align*}
\alpha^*\triangleq\frac{2+\bar{c}_2+2\sqrt{\frac{\bar{c}_1(1+\bar{c}_2)}{1+\bar{c}_1}}}{4\frac{1+\bar{c}_2}{1+\bar{c}_1}+\bar{c}_2^2}.
\end{align*}
Otherwise, they are not equal for any $\alpha$ in that range.  Furthermore, 
\begin{equation*}
    \alpha^*=\frac{1}{\bar{c}_2}
\end{equation*}
if and only if $\frac{1+\bar{c}_1}{1+\bar{c}_2}=\frac{1}{\bar{c}_2}$.
\end{lemma}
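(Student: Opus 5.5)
The plan is to turn the equation $\bar{u}_2(x_{\text{threshold}})=\bar{u}_2(\hat{x})$ into a quadratic in $\alpha$, using \eqref{eqn:u_two_x_thresh_w_alpha} from Lemma~\ref{lma:x_thresh} and \eqref{eqn:u2_xhat_alpha_less_1_c2bar} from Lemma~\ref{lma:x_hat}. I would then locate its roots relative to the interval $\left(\alpha_0,\frac{1}{\bar{c}_2}\right]$, where $\alpha_0\triangleq\frac{1}{\frac{2}{1+\bar{c}_1}+\bar{c}_2}$, by checking signs at the two endpoints.

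\textbf{Step 1: reduce to a quadratic.} Write $A\triangleq\frac{1+\bar{c}_2}{1+\bar{c}_1}$. For $\alpha>0$, multiply the difference $\bar{u}_2(x_{\text{threshold}})-\bar{u}_2(\hat{x})$ by $4\alpha$. This gives the concave quadratic
\begin{equation*}
q(\alpha)\triangleq-(4A+\bar{c}_2^2)\alpha^2+(4+2\bar{c}_2)\alpha-1,
\end{equation*}
and equality of the two utilities is equivalent to $q(\alpha)=0$. The discriminant simplifies to $(2+\bar{c}_2)^2-(4A+\bar{c}_2^2)=4(1+\bar{c}_2)\left(1-\frac{1}{1+\bar{c}_1}\right)=\frac{4\bar{c}_1(1+\bar{c}_2)}{1+\bar{c}_1}\ge 0$. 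Hence the roots are
\begin{equation*}
\alpha_\pm=\frac{2+\bar{c}_2\pm2\sqrt{\frac{\bar{c}_1(1+\bar{c}_2)}{1+\bar{c}_1}}}{4A+\bar{c}_2^2},
\end{equation*}
and $\alpha_+$ is exactly the $\alpha^*$ of the statement. Because $q$ is concave, $q>0$ strictly between $\alpha_-$ and $\alpha_+$ and $q<0$ outside $[\alpha_-,\alpha_+]$.

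\textbf{Step 2: sign at the left endpoint.} At $\alpha=\alpha_0$, the computation \eqref{eqn:xhat_smaller_than_xth} shows $\hat{x}=x_{\text{threshold}}$. So $\bar{u}_2(\hat{x})$ equals the first branch of \eqref{eqn:player_2_utility} evaluated at $x_{\text{threshold}}$, while $\bar{u}_2(x_{\text{threshold}})$ is the second branch at the same point. The second branch minus the first branch is $(1-f_1)(1-f_2)$. At $x_{\text{threshold}}$ we have $f_1=\frac{1}{1+\bar{c}_1}<1$ whenever $\bar{c}_1>0$, and $f_2<1$. Hence $q(\alpha_0)>0$, which places $\alpha_0$ strictly inside $(\alpha_-,\alpha_+)$. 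In particular $\alpha_-<\alpha_0$, so $\alpha_-$ never lies in the interval. The degenerate case $\bar{c}_1=0$ makes the discriminant zero and needs a separate short check; this boundary case is the step I expect to need the most care.

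\textbf{Step 3: sign at the right endpoint.} At $\alpha=\frac{1}{\bar{c}_2}$ we have $\bar{u}_2(\hat{x})=0$, so $q(1/\bar{c}_2)=\frac{4}{\bar{c}_2}\left(1-\frac{A}{\bar{c}_2}\right)$. This is $\le 0$ exactly when $A\ge\bar{c}_2$, which is equivalent to the hypothesis $\frac{1+\bar{c}_1}{1+\bar{c}_2}\le\frac{1}{\bar{c}_2}$.

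\textbf{Step 4: conclude.}
\begin{itemize}
\item If the hypothesis holds, $q$ changes from positive at $\alpha_0$ to nonpositive at $1/\bar{c}_2$. Since $\alpha_0\in(\alpha_-,\alpha_+)$, the only zero of $q$ in $(\alpha_0,1/\bar{c}_2]$ is $\alpha_+=\alpha^*$.
\item If the hypothesis fails, $q$ is positive at both endpoints. By concavity $q>0$ on the whole interval, so the two utilities are never equal there.
\item For the final claim, $\alpha^*=1/\bar{c}_2$ holds iff $q(1/\bar{c}_2)=0$, since $\alpha^*$ is the unique root in the interval and lies to the right of $\alpha_0$. That holds iff $A=\bar{c}_2$, i.e. iff $\frac{1+\bar{c}_1}{1+\bar{c}_2}=\frac{1}{\bar{c}_2}$.
\end{itemize}
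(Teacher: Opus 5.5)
Your proposal is correct and follows the paper's proof. Your $q(\alpha)$ is four times the difference of the two sides of \eqref{eqn:utility_xhat_smaller_than_xth}, the roots come from the same quadratic formula, and $\alpha^*$ is located by the signs at $\alpha_0$ and $1/\bar{c}_2$. The differences are minor: you get the sign at $\alpha_0$ from the identity ``second branch minus first branch $=(1-f_1)(1-f_2)$'' instead of the appendix algebra, and you use concavity rather than the sign at $\alpha=0$ to exclude $\alpha_-$.

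The case $\bar{c}_1=0$ that you flag closes at once. The paper passes over it, since its appendix only gives $\ge$. Here $q(\alpha)=-\left((2+\bar{c}_2)\alpha-1\right)^2$, whose double root $\alpha^*=\frac{1}{2+\bar{c}_2}=\alpha_0$ lies outside the half-open interval. So the utilities are never equal there, and both sides of the final equivalence are false.
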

\begin{proof}
We check to see when $\bar{u}_2(x_{\text{threshold}})$ is equal to $\bar{u}_2(\hat{x})$ by equating~\eqref{eqn:u_two_x_thresh_w_alpha} and~\eqref{eqn:u2_xhat_alpha_less_1_c2bar} and multiplying both sides by $\alpha$:
\begin{align}
    -\alpha^2\left(\frac{1+\bar{c}_2}{1+\bar{c}_1}\right) + \alpha=\frac{1}{4}(1-\alpha\bar{c}_2)^2.
\label{eqn:utility_xhat_smaller_than_xth}
\end{align}
Note from Lemma~\ref{lma:x_hat} that this is only valid for $\alpha\le\frac{1}{\bar{c}_2}$. We are able to consider the case $\alpha=\frac{1}{\bar{c}_2}$ as part of this analysis because the value of equation~\eqref{eqn:u2_xhat_alpha_less_1_c2bar} is equal to $\bar{u}_2(\hat{x}^*)$ from Lemma~\ref{lma:x_hat} at this value of $\alpha$. The left-hand side is a negative quadratic and the right-hand side is a positive quadratic in $\alpha$. These are concave and convex respectively. We find the points where these two parabolas intersect through a straightforward application of the quadratic formula, which yields 
\begin{align*}
\alpha=\frac{2+\bar{c}_2\pm2\sqrt{\frac{\bar{c}_1(1+\bar{c}_2)}{1+\bar{c}_1}}}{4\frac{1+\bar{c}_2}{1+\bar{c}_1}+\bar{c}_2^2}.
\end{align*}
We denote the larger of the two possible roots by
\begin{align*}
\alpha^*
\triangleq
\frac{2+\bar{c}_2+2\sqrt{\frac{\bar{c}_1(1+\bar{c}_2)}{1+\bar{c}_1}}}{4\frac{1+\bar{c}_2}{1+\bar{c}_1}+\bar{c}_2^2}.
\end{align*}
At $\alpha=0$, the left-hand side of~\eqref{eqn:utility_xhat_smaller_than_xth} is clearly less than the right-hand side. In Appendix~\ref{subsec:plug_xth_xopt_cond_into_utility}, we show that the opposite is true when $\alpha=\frac{1}{\frac{2}{1+\bar{c}_1}+\bar{c}_2}$, and thus one of the two equality points must be between these two values for $\alpha$. At $\alpha=\frac{1}{\bar{c}_2}$, the right side of~\eqref{eqn:utility_xhat_smaller_than_xth} is zero, and the left-hand side will be larger than the right side if $\frac{1}{\bar{c}_2}<\frac{1+\bar{c}_1}{1+\bar{c}_2}$, smaller if $\frac{1}{\bar{c}_2}>\frac{1+\bar{c}_1}{1+\bar{c}_2}$, and equal if $\frac{1}{\bar{c}_2}=\frac{1+\bar{c}_1}{1+\bar{c}_2}$. In the first case the left-hand side is larger, so the second intersection point must be greater than $\frac{1}{\bar{c}_2}$. In the second case, the right-hand side is larger, so the second intersection point must be in $\frac{1}{\frac{2}{1+\bar{c}_1}+\bar{c}_2}<\alpha<\frac{1}{\bar{c}_2}$. In the last case $\alpha^*=\frac{1}{\bar{c}_2}$.
\end{proof}
We are now in place to provide the proof of Theorem~\ref{thrm:player_2_strategy}.
\subsection{Theorem 2 Proof}\label{subsec:Theorem 2 Proof}
\begin{proof}
We know that there are two possible values, one for each range of $x_2$ in equation~\eqref{eqn:player_2_utility}, that may be the optimal strategy for Player 2. We will formulate Player 2's optimal choice between these two values based on $\alpha$, $\bar{c}_1$, and $\bar{c}_2$. We will start at $\alpha>\text{max}\{\frac{1}{\bar{c}_2},\frac{1+\bar{c}_1}{1+\bar{c}_2}\}$ and use the conditions from Lemmas~\ref{lma:x_thresh},~\ref{lma:x_hat}, and~\ref{lma:root_calculation} to find points in $\alpha$ where the optimal strategy changes. 

At $\alpha>\text{max}\{\frac{1}{\bar{c}_2},\frac{1+\bar{c}_1}{1+\bar{c}_2}\}$, $\alpha > \frac{1}{\bar{c}_2}$ and $\alpha>\frac{1+\bar{c}_1}{1+\bar{c}_2}$. This means that $\bar{u}_2(x_{\text{threshold}})$ is negative by Lemma~\ref{lma:x_thresh} and $\bar{u}_2(\hat{x})=0$ by Lemma~\ref{lma:x_hat}, so $x_2^*=\hat{x}^*=0$ is the optimal strategy. 

To analyze values of $\alpha\le \text{max}\{\frac{1}{\bar{c}_2},\frac{1+\bar{c}_1}{1+\bar{c}_2}\}$, we need to know whether $\alpha=\frac{1}{\bar{c}_2}$ or $\alpha=\frac{1+\bar{c}_1}{1+\bar{c}_2}$ is larger. If we compare the two, we get the following:
\begin{align*}
\frac{1}{\bar{c}_2}&<\frac{1+\bar{c}_1}{1+\bar{c}_2}\nonumber\\
\Leftrightarrow 1+\bar{c}_2&<\bar{c}_2+\bar{c}_1\bar{c}_2\nonumber\\
\Leftrightarrow 1&<\bar{c}_1\bar{c}_2,
\end{align*}
which additionally shows that $\bar{c}_1\bar{c}_2=1$ and $\frac{1}{\bar{c}_2}=\frac{1+\bar{c}_1}{1+\bar{c}_2}$ are equivalent conditions. This splits our analysis into three branches: $\bar{c}_1\bar{c}_2 > 1$, $\bar{c}_1\bar{c}_2 < 1$, and $\bar{c}_1\bar{c}_2 = 1$.
\subsubsection{\textbf{Case 1:} $\bar{c}_1 \bar{c}_2 > 1$}
The optimal choice in the region $\frac{1}{\bar{c}_2}<\alpha<\frac{1+\bar{c}_1}{1+\bar{c}_2}$ is trivially $x_{\text{threshold}}$ since we know from Lemma~\ref{lma:x_hat} that $\bar{u}_2(\hat{x})=0$ and from Lemma~\ref{lma:x_thresh} that $\bar{u}_2(x_\text{threshold})$ is positive. From Lemma~\ref{lma:root_calculation}, we know that there will be no points where the optimal strategy changes in the range $\frac{1}{\frac{2}{1+\bar{c}_1}+\bar{c}_2}<\alpha\le\frac{1}{\bar{c}_2}$, and we know that $x_\text{threshold}$ will be the optimal strategy for $\alpha<\frac{1}{\frac{2}{1+\bar{c}_1}+\bar{c}_2}$ from Lemma~\ref{lma:x_hat}. Therefore, for this case, the optimal choice for any $\alpha<\frac{1+\bar{c}_1}{1+\bar{c}_2}$ is $x_{\text{threshold}}$.
\subsubsection{\textbf{Case 2: } $\bar{c}_1\bar{c}_2<1$}
By a similar argument as the first case, we reason that $x_2^*=\hat{x}$ will be the best choice for the region $\frac{1+\bar{c}_1}{1+\bar{c}_2}<\alpha<\frac{1}{\bar{c}_2}$. From Lemma~\ref{lma:root_calculation}, we know that the optimal strategy will switch only at $\alpha=\alpha^*$ in the region $\frac{1}{\frac{2}{1+\bar{c}_1}+\bar{c}_2}<\alpha<\frac{1}{\bar{c}_2}$ (we know from Lemma~\ref{lma:root_calculation} that $\alpha^*\ne\frac{1}{\bar{c}_2}$ because $\bar{c}_1\bar{c}_2\ne1$), and thus the optimal strategy will be $x_\text{threshold}$ for $\frac{1}{\frac{2}{1+\bar{c}_1}+\bar{c}_2}<\alpha<\alpha^*$. As with the previous case, we know from Lemma~\ref{lma:x_hat} that $x_\text{threshold}$ will also be the optimal solution for $\alpha\le\frac{1}{\frac{2}{1+\bar{c}_1}+\bar{c}_2}$ which completes all possible values for $\alpha$.
\subsubsection{\textbf{Case 3: } $\bar{c}_1\bar{c}_2=1$}
In this case, we have $\frac{1}{\bar{c}_2}=\frac{1+\bar{c}_1}{1+\bar{c}_2}$. From Lemma~\ref{lma:root_calculation}, $\alpha^*=\frac{1}{\bar{c}_2}=\frac{1+\bar{c}_1}{1+\bar{c}_2}$, so there are no turning points in the range $\frac{1}{\frac{2}{1+\bar{c}_1}+\bar{c}_2}<\alpha<\frac{1}{\bar{c}_2}$. From Appendix~\ref{subsec:plug_xth_xopt_cond_into_utility}, we know that $x_\text{threshold}$ is optimal at $\alpha=\frac{1}{\frac{2}{1+\bar{c}_1}+\bar{c}_2}$, so $x_\text{threshold}$ is the optimal solution for $\alpha<\frac{1}{\bar{c}_2}$, which is equivalent to our optimal strategies for the $\bar{c}_1 \bar{c}_2 > 1$ case.
It is worth noting that this is also equivalent to the optimal strategies in the $\bar{c}_1\bar{c}_2<1$ case, since no value for $\alpha$ satisfies $\alpha^*\le\alpha<\frac{1}{\bar{c}_2}$.
\end{proof}

\section{Optimal Utilities for Each Player}\label{sec:utility_discussion}
Now that we have the optimal strategies for each player from Theorems~\ref{thrm:player_1_strategy} and~\ref{thrm:player_2_strategy}, we will characterize each player's utility when they both play their optimal strategies.  We begin by re-defining Player 1's expected utility \textit{before} Player 2 has completed their turn. This means we must account for uncertainty in Player 2's success of their investment when evaluating Player 1's utility. As in Section~\ref{subsec:player_1_analysis}, we assume that Player 2 will attempt to invest some utility $x_2$. Thus, Player 1's expected utility for playing $x_1$, as evaluated prior to observing the outcome (success or failure of Player 2's investment) is given by
\begin{multline*}
E[u_1(x_1, x_2)]\\=
\begin{cases}
W_1(1-f_1(x_1))f_2(x_2)\\
\qquad + W_2(1-f_1(x_1))(1-f_2(x_2))\\
\qquad + (W_2-c_1)f_1(x_1),&0\le x_1<x_2, \\
W_1f_2(x_2)\\
\qquad +W_1(1-f_1(x_1))(1-f_2(x_2))\\
\qquad +(W_2-c_1)f_1(x_1)(1-f_2(x_2)),&x_1\ge x_2.
\end{cases}
\end{multline*}
Specifically, if Player 2 invests $x_2$, then for any investment $0 \le x_1 < x_2$ by Player 1, they will receive a prize of $W_1$ if they do not fail with probability $1-f_1(x_1)$ but Player 2 does. Otherwise, they will receive a prize of $W_2$ if neither player fails or Player 1 fails and pays a cost of $c_1$. This yields the expected utility in the first case shown above. On the other hand, if Player 1 chooses to invest an effort $x_1 \geq x_2$, then they will receive $W_1$ if neither player fails or only Player 2 fails. Otherwise, Player 1 will receive $W_2$ and pay a failure cost of $c_1$ if they fail with probability $f_1(x_1)$ and Player 2 succeeds.

As before, we can normalize this utility by defining
\begin{equation*}
    \bar{u}_1(x_1, x_2) \triangleq \frac{E[u_1(x_1, x_2)]-W_2}{W_1-W_2},
\end{equation*}
which yields:
\begin{multline*}
\bar{u}_1(x_1, x_2)\\ =
\begin{cases}
(1-f_1(x_1))f_2(x_2)\\
\qquad -\bar{c}_1f_1(x_1), &0\le x_1<x_2, \\ 
f_2(x_2)+(1-f_2(x_2))\\
\qquad[1-f_1(x_1)-\bar{c}_1f_1(x_1)], &x\ge x_2.
\end{cases}
\end{multline*}
This completes our definition for Player 1's utility before the start of the game.

The normalized utility for Player 2 before the start of the competition is identical to~\eqref{eqn:player_2_utility}, since they have no different information before the competition than when they play.
We will see that when the optimal strategies are used for each player, the resulting normalized utilities are only dependent on $\alpha$, $\bar{c}_1$, and $\bar{c}_2$, which we provide in the following corollary.

\begin{corollary}\label{corollary:cor1}
    When the optimal strategies for each player are played, the values of $\bar{u}_1(\alpha)=\bar{u}_1(x_1^*,x_2^*)$ and $\bar{u}_2(\alpha)=\bar{u}_2(x_2^*)$ are given in the following two cases. \\
    If $\bar{c}_1\bar{c}_2 \ge 1$:
\begin{align*}
\bar{u}_1(\alpha) &=
\begin{cases}
    \frac{\alpha}{1+\bar{c}_1}, & 0 \le \alpha < \frac{1+\bar{c}_1}{1+\bar{c}_2}, \\
    1, & \frac{1+\bar{c}_1}{1+\bar{c}_2} \le \alpha,
\end{cases} \\[1em]
\bar{u}_2(\alpha) &=
\begin{cases}
    1-\alpha\left(\frac{1+\bar{c}_2}{1+\bar{c}_1}\right), & 0 \le \alpha < \frac{1+\bar{c}_1}{1+\bar{c}_2}, \\
    0, & \frac{1+\bar{c}_1}{1+\bar{c}_2} \le \alpha.
\end{cases}
\end{align*}
    \noindent Otherwise, if  $\bar{c}_1\bar{c}_2 < 1$:
        \begin{align*}
        \bar{u}_1(\alpha) &=
        \begin{cases}
        \frac{\alpha}{1+\bar{c}_1}, & 0 \le \alpha < \alpha^*, \\
        1+\frac{1}{4\alpha}\left(\bar{c}_2^2\alpha^2-1\right)(1+\bar{c}_1), & \alpha^* \le \alpha < \frac{1}{\bar{c}_2}, \\
        1, & \frac{1}{\bar{c}_2} \le \alpha,
        \end{cases}\\
        \bar{u}_2(\alpha) &=
        \begin{cases}
        1-\alpha\left(\frac{1+\bar{c}_2}{1+\bar{c}_1}\right), & 0 \le \alpha < \alpha^*, \\
        \frac{1}{4\alpha}(1 - \alpha \bar{c}_2)^2, & \alpha^* \le \alpha < \frac{1}{\bar{c}_2}, \\
        0, & \frac{1}{\bar{c}_2} \le \alpha.
        \end{cases}
        \end{align*}
\end{corollary}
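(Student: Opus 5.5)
The plan is to substitute the optimal strategy pairs from Theorems~\ref{thrm:player_1_strategy} and~\ref{thrm:player_2_strategy} into the normalized utilities $\bar{u}_1(x_1,x_2)$ and $\bar{u}_2(x_2)$, one branch of Theorem~\ref{thrm:player_2_strategy} at a time. Player 2's values need almost no work. On the branch $x_2^*=x_{\text{threshold}}$, Lemma~\ref{lma:x_thresh} gives $\bar{u}_2=1-\alpha\frac{1+\bar{c}_2}{1+\bar{c}_1}$. On the branch $x_2^*=\hat{x}$, Lemma~\ref{lma:x_hat} gives $\frac{1}{4\alpha}(1-\alpha\bar{c}_2)^2$. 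On the branch $x_2^*=0$, the utility is $0$, since $f_2(0)=f_1(0)=0$. The $\alpha$-ranges are copied verbatim from Theorem~\ref{thrm:player_2_strategy}, including the boundary conventions there.

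For Player 1, I first fix their response on each branch, using Theorem~\ref{thrm:player_1_strategy}. Player 1 plays $x_1^*=x_2^*$ when $x_2^*<x_{\text{threshold}}$ and folds otherwise. A failure by Player 2 means an automatic win for Player 1, which the pre-game formula for $\bar{u}_1$ already accounts for through the $f_2(x_2)$ terms.

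\emph{Branch $x_2^*=x_{\text{threshold}}$.} Player 1 folds, so $x_1=0<x_2$. The first case of $\bar{u}_1$ gives $f_2(x_{\text{threshold}})$, which equals $\frac{\alpha}{1+\bar{c}_1}$ by the computation in the proof of Lemma~\ref{lma:x_thresh}.

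\emph{Branch $x_2^*=0$.} Player 1 plays $x_1=0\ge x_2$. The second case gives $f_2(0)+(1-f_2(0))\cdot 1=1$.

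\emph{Branch $x_2^*=\hat{x}$.} Here I must confirm that $\hat{x}<x_{\text{threshold}}$, so that Player 1 matches. This holds because on this branch $\alpha\ge\alpha^*>\frac{1}{\frac{2}{1+\bar{c}_1}+\bar{c}_2}$: Lemma~\ref{lma:root_calculation} places $\alpha^*$ in that range, and \eqref{eqn:xhat_smaller_than_xth} then gives the inequality. The second case of $\bar{u}_1$ can be rewritten as $1-(1-f_2)(1+\bar{c}_1)f_1$. Using $f_2(\hat{x})=\frac12(1-\alpha\bar{c}_2)$ and $f_1(\hat{x})=\frac{1}{2\alpha}(1-\alpha\bar{c}_2)$ from the proof of Lemma~\ref{lma:x_hat}, we get $(1-f_2)f_1=\frac{(1+\alpha\bar{c}_2)(1-\alpha\bar{c}_2)}{4\alpha}$. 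Hence $\bar{u}_1=1+\frac{1}{4\alpha}(\bar{c}_2^2\alpha^2-1)(1+\bar{c}_1)$.

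Assembling these by the two cases $\bar{c}_1\bar{c}_2\ge1$ and $\bar{c}_1\bar{c}_2<1$ of Theorem~\ref{thrm:player_2_strategy} yields the stated expressions. The only delicate step is the middle branch, where one must verify that $\hat{x}$ lies strictly below $x_{\text{threshold}}$ throughout $\alpha^*\le\alpha<\frac{1}{\bar{c}_2}$, so that Player 1 challenges rather than folds. I would handle it by the ordering of $\alpha^*$ from Lemma~\ref{lma:root_calculation} as above; everything else is direct substitution.
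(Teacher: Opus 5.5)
Your proposal is correct and follows the paper's proof essentially verbatim: it uses the same three configurations (Player 1 folds at $x_2^*=x_{\text{threshold}}$, Player 2 folds at $x_2^*=0$, both compete at $x_1^*=x_2^*=\hat{x}$) with direct substitution of $f_1,f_2$, and your explicit check that $\hat{x}<x_{\text{threshold}}$ on the middle branch is a step the paper leaves implicit. That check relies on $\alpha^*>\frac{1}{\frac{2}{1+\bar{c}_1}+\bar{c}_2}$, which fails only in the degenerate case $\bar{c}_1=0$. There the two roots in Lemma~\ref{lma:root_calculation} coincide and $\hat{x}=x_{\text{threshold}}$ at $\alpha=\alpha^*$, but the folding and competing formulas give the same values at that point, so the statement is unaffected.
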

Before providing the proof of the corollary, we discuss the key insights the result provides.

\textbf{Key Insights:} 
We see that the utilities for each player are uniform over each of the differing strategy regions from Theorem~\ref{thrm:player_2_strategy}. 
We also see that each player's utility increases as their abilities increase relative to the other. Additionally, in all cases, the players' utilities are decreasing in their own failure cost. We note that in the case where $\bar{c}_1\bar{c}_2\ge1$, if Player 2 has a nonzero failure cost, Player 1 has a jump in utility as $\alpha$ increases past $\frac{1+\bar{c}_1}{1+\bar{c}_2}$, which is attributed to Player 2 folding. When Player 1 folds, Player 2's utility is increasing with $\bar{c}_1$. As $\bar{c}_2\to\infty$, $\bar{u}_1(\alpha)\to1$, and as $c_1\to\infty$, $\bar{u}_2(\alpha)\to1$ with finite values for other parameters, and thus each player's utility tends to increase with their opponent's failure cost.
We will now provide the proof of the corollary.

\begin{proof}
Recall that the strategies of Player 1 and Player 2 are entirely determined by the cases outlined in Theorem~\ref{thrm:player_2_strategy}, which can be split into three possible strategy configurations: either Player 1 will fold, Player 2 will fold, or neither Player will fold.

\subsection{Player 1 Folds}
From Theorem~\ref{thrm:player_2_strategy}, Player 2 will force Player 1 to fold in the $\bar{c}_1\bar{c}_2\ge1$ case if $0\le\alpha<\frac{1+\bar{c}_1}{1+\bar{c}_2}$, and in the $\bar{c}_1\bar{c}_2<1$ case if $0\le\alpha<\alpha^*$. Player 2's optimal effort to invest in this case is
\begin{equation*}
x_2^*=x_{1_{\text{max}}}\sqrt[r]{\frac{1}{1+\bar{c}_1}},
\end{equation*}
and Player 1's optimal effort from Theorem~\ref{thrm:player_1_strategy} is $x_1^*=0$. With these strategies,
\begin{align*}
    \bar{u}_1(\alpha) &= f_2\left(x_{1_{\text{max}}}\sqrt[r]{\frac{1}{1+\bar{c}_1}}\right) \\
    &= \frac{\alpha}{1+\bar{c}_1}, \\
    \bar{u}_2(\alpha) &= 1-(1+\bar{c}_2)f_2\left(x_{1_{\text{max}}}\sqrt[r]{\frac{1}{1+\bar{c}_1}}\right) \\
    &= 1-\alpha\left(\frac{1+\bar{c}_2}{1+\bar{c}_1}\right).
\end{align*}
\subsection{Player 2 Folds}
From Theorem~\ref{thrm:player_2_strategy}, Player 2 will fold in the $\bar{c}_1\bar{c}_2\ge1$ case if $\alpha\ge\frac{1}{\bar{c}_2}$, and in the $\bar{c}_1\bar{c}_2<1$ case if $\alpha\ge\frac{1+\bar{c}_1}{1+\bar{c}_2}$. Both players' optimal efforts to invest in this case are $x_1^*=x_2^*=0$. With these strategies:
\begin{align*}
    \bar{u}_1(\alpha) &= f_2(0) + (1-f_2(0))(1-f_1(0)-\bar{c}_1f_1(0)) = 1, \\
    \bar{u}_2(\alpha) &= f_1(0)(1-f_2(0))-\bar{c}_2f_2(0)= 0.
\end{align*}
\subsection{Neither Player Folds}
From Theorem~\ref{thrm:player_2_strategy}, only in the $\bar{c}_1\bar{c}_2<1$ case will neither player fold, if $\alpha^*\le\alpha<\frac{1}{\bar{c}_2}$. In this case both players' optimal efforts to invest are
\begin{equation*}
    x_1^*=x_2^*=x_{2_{\text{max}}}\sqrt[r]{\frac{1}{2}(1 - \bar{c}_2 \alpha)}.
\end{equation*}
With these strategies, we get
\begin{align*}
\bar{u}_1(\alpha) &= 1+f_1\left(x_{2_{\text{max}}}\sqrt[r]{\frac{1}{2}(1 - \bar{c}_2 \alpha)}\right) \times \\
&\qquad \left[\left(f_2\left(x_{2_{\text{max}}}\sqrt[r]{\frac{1}{2}(1 - \bar{c}_2 \alpha)}\right)-1\right)(1+\bar{c}_1)\right]\\
&=1+\frac{1}{4\alpha}\left(\bar{c}_2^2\alpha^2-1\right)(1+\bar{c}_1),
\end{align*}
and 
\begin{align*}
    \bar{u}_2(\alpha)&= f_1\left(x_{2_{\text{max}}}\sqrt[r]{\frac{1}{2}(1-\bar{c}_2 \alpha)}\right)\times \\
    &\qquad\left[1-f_2\left(x_{2_{\text{max}}}\sqrt[r]{\frac{1}{2}(1 - \bar{c}_2 \alpha)}\right)\right]\\
    &\qquad-c_2f_2\left(x_{2_{\text{max}}}\sqrt[r]{\frac{1}{2}(1 - \bar{c}_2 \alpha)}\right) \\
    &=\frac{1}{4\alpha}(1 - \alpha \bar{c}_2)^2.
\end{align*}
\end{proof}

\begin{table*}
\centering
\small
\begin{tabular}{c|c|c c|c c|c|c c|c|c c}
\hline
\text{Row} & \textbf{Case} & $x_{1_{\text{max}}}$ & $x_{2_{\text{max}}}$ & $\bar{c}_1$ & $\bar{c}_2$ & r &  $x_1^*$ & $x_2^*$ & \textbf{Outcome} & $\bar{u}_1(\alpha)$ & $\bar{u}_2(\alpha)$ \\
\hline
1
&$\bar{c}_1\bar{c}_2 \ge 1$
& 10 & 10 
& 10 & 10
& 10
& 7.868 & 0 
& Player 2 Folds 
& 1 & 0 \\

\hline
2
&$\bar{c}_1\bar{c}_2 \ge 1$
& 10 & 10 
& 5 & 10
& 10
& 8.360 & 0 
& Player 2 Folds 
& 1 & 0 \\

\hline
3
&$\bar{c}_1\bar{c}_2 \ge 1$
& 10 & 10 
& 10 & 5
& 10
& 0 & 7.868 
& Player 1 Folds 
& 0.091 & 0.456 \\

\hline
4
&$\bar{c}_1\bar{c}_2 \ge 1$
& 10 & 5 
& 10 & 10
& 10
& 7.868 & 0 
& Player 2 Folds 
& 1 & 0 \\

\hline
5
&$\bar{c}_1\bar{c}_2 \ge 1$
& 5 & 10 
& 10 & 10
& 10
& 0 & 3.934 
& Player 1 Folds 
& 0 & 0.999 \\

\hline

6
&$\bar{c}_1\bar{c}_2 \ge 1$
& 10 & 5 
& 100 & 5
& 10
& 6.303 & 0 
& Player 2 Folds 
& 1 & 0 \\

\hline

7
&$\bar{c}_1\bar{c}_2 \ge 1$
& 5 & 10 
& 5 & 100
& 10
& 0 & 4.180 
& Player 1 Folds 
& 0 & 0.984 \\
\hline

8
&$\bar{c}_1\bar{c}_2 \ge 1$
& 5 & 10 
& 5 & 100
& 1
& 0.833 & 0 
& Player 2 Folds 
& 1 & 0 \\

\hline
9
&$\bar{c}_1\bar{c}_2 < 1$
& 10 & 10 
& 0 & 0
& 10
& 9.330 & 9.330 
& Both Compete 
& 0.75 & 0.25 \\

\hline
10
&$\bar{c}_1\bar{c}_2 < 1$
& 10 & 10 
& 0 & 10
& 10
& 10 & 0 
& Player 2 Folds 
& 1 & 0 \\

\hline
11
&$\bar{c}_1\bar{c}_2 < 1$
& 10 & 10 
& 10 & 0
& 10
& 0 & 7.868 
& Player 1 Folds 
& 0.091 & 0.909 \\

\hline
12
&$\bar{c}_1\bar{c}_2 < 1$
& 10 & 5 
& 0 & 0
& 10
& 4.665 & 4.665 
& Both Compete 
& 0.999 & 0 \\

\hline
13
&$\bar{c}_1\bar{c}_2 < 1$
& 5 & 10 
& 0 & 0
& 10
& 0 & 5 
& Player 1 Folds 
& 0 & 0.999 \\

\hline
\end{tabular}

\caption{Numerical examples for various outcomes in the Stackelberg model.}\label{Table:Utility_Discussion}

\end{table*}

\subsection{Numerical Examples}
To gain further insight into the utilities and impact of failure probability and cost, we will compare numerical results from different scenarios provided from Table~\ref{Table:Utility_Discussion}. These results were calculated using Theorem~\ref{thrm:player_1_strategy}, Theorem~\ref{thrm:player_2_strategy}, and Corollary~\ref{corollary:cor1}. The first takeaway from rows 1 and 9 is that if Player 1 and Player 2 are equal players (they have the same $x_{i_{\text{max}}}$ and $\bar{c}_i$), then Player 1 tends to have higher utility. This indicates that when considering the probability of failure with an associated cost, it is better to be Player 1 in a competition of equal players. The second main takeaway is in the case of heterogeneous players, (the $x_{i_{\text{max}}}$ is different while $\bar{c}_i$ is the same or vice versa) represented in rows 2-5 and rows 10 and 11, the player with a lower cost or higher maximum effort tends to have a higher utility. However, in rows 6 and 7 we show that a significantly high failure cost can lower the optimal strategy value and corresponding utilities. Rows 7 and 8 demonstrate that increasing $r$ can change the outcome of a game from Player 1 folding to Player 2 folding, even when their relative effort and failure costs are the same. 
In rows 12 and 13, we show an example of the outcome when each player's failure costs are zero. The insights from Table~\ref{Table:Utility_Discussion} reflect those provided after each of the key results in our paper.

\section{Conclusion}\label{sec:Conclusion}
In this paper, we model a Stackelberg game in which two players (Player 1 and Player 2) invest effort while accounting for the probability of failure. The optimal strategies for the heterogeneous players are formulated in Theorems 1 and 2. The optimal strategies are a function of the relative abilities of the players, the individual costs of failure, and the shape of the failure function. Using these optimal strategies, we re-formulated the utilities accordingly for each player in terms of $\alpha$. We then provided a numerical discussion of the utilities and discussed the key insights to these results. In future work, it would be of interest to study the impact of information asymmetry in the players' maximum effort and cost as well as extending this game to more than two players.

\section{Appendix}\label{Appendix}

\subsection{Comparison of Utility when $x_{\text{threshold}}=\hat{x}$}\label{subsec:plug_xth_xopt_cond_into_utility}
We would like to compare $\bar{u}_2(x_\text{threshold})$ and $\bar{u}_2(\hat{x}_2^*)$ at the $\alpha$ where $x_\text{threshold}=\hat{x}_2^*$.
This occurs when $\alpha=\frac{1}{\frac{2}{1+\bar{c}_1}+\bar{c}_2}$ from \eqref{eqn:xhat_smaller_than_xth} and yields the following:
\begin{align*}
&-\alpha^2\left(\frac{1+\bar{c}_2}{1+\bar{c}_1}\right) + \alpha\ge\frac{1}{4}(1-\alpha\bar{c}_2)^2 \nonumber\\
&\Leftrightarrow \frac{1+\bar{c}_1}{2+\bar{c}_2+\bar{c}_1\bar{c}_2}\left(1-\frac{1+\bar{c}_1}{2+\bar{c}_2+\bar{c}_1\bar{c}_2}\left(\frac{1+\bar{c}_2}{1+\bar{c}_1}\right)\right)\ge \nonumber\\
&\qquad \frac{1}{4}\left(1-\frac{\bar{c}_2+\bar{c}_1\bar{c}_2}{2+\bar{c}_2+\bar{c}_1\bar{c}_2}\right)^2 \nonumber\\
&\Leftrightarrow \frac{1+\bar{c}_1}{2+\bar{c}_2+\bar{c}_1\bar{c}_2}\left(\frac{2+\bar{c}_2+\bar{c}_1\bar{c}_2-1-\bar{c}_2}{2+\bar{c}_2+\bar{c}_1\bar{c}_2}\right)\ge \nonumber\\
&\qquad \frac{1}{4}\left(\frac{2+\bar{c}_2+\bar{c}_1\bar{c}_2-\bar{c}_2-\bar{c}_1\bar{c}_2}{2+\bar{c}_2+\bar{c}_1\bar{c}_2}\right)^2 \nonumber\\
&\Leftrightarrow \frac{1+\bar{c}_1}{2+\bar{c}_2+\bar{c}_1\bar{c}_2}\left(\frac{1+\bar{c}_1\bar{c}_2}{2+\bar{c}_2+\bar{c}_1\bar{c}_2}\right)\ge\\
&\qquad\frac{1}{4}\left(\frac{2}{2+\bar{c}_2+\bar{c}_1\bar{c}_2}\right)^2 \nonumber\\
&\Leftrightarrow (1+\bar{c}_1)\left(\frac{1+\bar{c}_1\bar{c}_2}{2+\bar{c}_2+\bar{c}_1\bar{c}_2}\right)\ge\frac{1}{2+\bar{c}_2+\bar{c}_1\bar{c}_2} \nonumber\\
&\Leftrightarrow (1+\bar{c}_1)(1+\bar{c}_1\bar{c}_2)\ge1. \nonumber
\end{align*}
The left hand side is at least 1, so this inequality holds true.


\bibliographystyle{IEEEtran}
\bibliography{refs}

\end{document}